\newcommand{\norm}[1]{\left\lVert#1\right\rVert}
\newtheorem{theorem}{Theorem}
\newtheorem{lemma}{Lemma}
\begin{document}

\title{Low-Complexity Iterative Methods for Complex-Variable Matrix Optimization Problems \\in Frobenius Norm}

\author{ Sai Wang, \IEEEmembership{Student Member, IEEE}, and Yi Gong, \IEEEmembership{Senior Member, IEEE}

\thanks{The authors are with the department of electrical and electronic engineering, Southern University of Science and Technology, Shenzhen, China (e-mail: gongy@sustech.edu.cn).}}

\maketitle

\begin{abstract}
Complex-variable matrix optimization problems (CMOPs) in Frobenius norm emerge in many areas of applied mathematics and engineering applications. In this letter, we focus on solving CMOPs by iterative methods. For unconstrained CMOPs, we prove that the gradient descent (GD) method is feasible in the complex domain. Further, in view of reducing the computation complexity, constrained CMOPs are solved by a projection gradient descent (PGD) method. The theoretical analysis shows that the PGD method maintains a good convergence in the complex domain. Experiment results well support the theoretical analysis.
\end{abstract}

\begin{IEEEkeywords}
Complex variables, Matrix optimization, Convergence Analysis.
\end{IEEEkeywords}

\IEEEpeerreviewmaketitle

\section{Introduction}

\IEEEPARstart{T}{he} real-valued problems in complex variables have widely arisen in control theory \cite{control}, medical imaging \cite{medicalimage}, and signal processing \cite{signal}, especially in waveform design \cite{beam}. Unfortunately, they are necessarily nonanalytic. A traditional method of solving complex-variable matrix optimization problems (CMOPs) uses the derivatives with respect to the real and imaginary parts separately to generate a descent direction. Then the combination or optimization is performed in an augmented space by converting the complex domain into the real domain of double the dimension. By reformulating an optimization problem that is inherently complex to the real domain, it is easy to miss some important information about the physical characteristics of the original problem. Moreover, unnecessarily long expressions will suffer from high computational complexity and a slow convergence rate when dealing with a large-scale problem. Hence, developing the complex-variable optimization method with low computational complexity is needed. To overcome this challenge, the calculus underlying complex derivatives was developed by Wirtinger in the early 20th century. Recent complex optimization theory has shown that the Wirtinger calculus allows a mathematically robust definition of a gradient operator in the complex domain \cite{LaurentSorber2012}, which can be done without considering the real and imaginary parts separately and without doubling the dimension. In other words, the complex optimization methods are performed directly in the complex domain. To cope with the complex-valued nonlinear programming problem with linear equality constraints, the authors in \cite{SongchuanZhang2018,7353156} proposed a complex-valued optimization method that has a global convergence under mild conditions. In \cite{SitianQin2018}, a one-layer recurrent neural network was proposed for solving the constrained real-valued problem in complex variables. The state of the proposed neural network has a lower model complexity and better convergence. Note that the above complex-variable problems are limited to the complex vector. The authors in \cite{SongchuanZhang2021} proposed a complex-valued projection neural network in a matrix state space to solve a complex-variable basis pursuit problem. In addition, the complex-value step size was investigated for the complex gradient method  \cite{7321072}. To our knowledge, there is little systematic analysis of convex CMOPs in Frobenius norm.

\textbf{Notation and preliminaries.}
Vectors are denoted by boldface lowercase letters, e.g., $\boldsymbol{a}$. Matrices are denoted by boldface capital letters, e.g., $\boldsymbol{A}$. The $i$-th entry of a vector $\boldsymbol{a}$ is denoted by $a_i$, element $(i,j)$ of a matrix $\boldsymbol{A}$ by $a_{i,j}$. The superscripts $(\cdot)^T$, $(\cdot)^H$ and $(\cdot)^{-1}$ are used for the transpose, Hermitian transpose, and matrix inverse, respectively. The real part of Frobenius inner product and Frobenius norm are denoted by $\mbox{Re}[\cdot,\cdot]_F$ and $\norm{\cdot}_F$, respectively.
In order to facilitate the division of the real number and pure imaginary number, for a complex number $a\in \mathbb{C}$, the real part of $a$ is $\mbox{Re}(a)=\bar{a}$ and the imaginary part of $a$ is $\mbox{Im}(a)=\ddot{a}$, respectively.
Thus a complex matrix is written as $\boldsymbol{W}=\bar{\boldsymbol{W}}+\ddot{\boldsymbol{W}}i$.

\section{Unconstrained CMOPs in Frobenius Norm}\label{section2}
In this section, we derive the optimal condition of the unconstrained CMOP. Then we prove that the gradient descent (GD) method is feasible in the complex domain. An unconstrained CMOP in Frobenius norm $\mathcal{F}(\boldsymbol{W}): \mathbb{C}^{N\times K}\rightarrow \mathbb{R}$ is written as
\begin{equation}
\begin{aligned}
\mathcal{P}1\quad \min_{\boldsymbol{W}}  \mathcal{F}(\boldsymbol{W})= \frac{1}{2}\norm{\boldsymbol{H}\boldsymbol{W}-\boldsymbol{A}}_{F}^2.
\end{aligned}
\end{equation}
where $ \boldsymbol{H}\in \mathbb{C}^{M\times N},\boldsymbol{W}\in \mathbb{C}^{N\times K}, \boldsymbol{A}\in \mathbb{C}^{M\times K}$. Provide $N\leq M$ condition.

\begin{lemma}
For an unconstrained CMOP $\mathcal{F}(\boldsymbol{W}): \mathbb{C}^{N\times K}\rightarrow \mathbb{R}$, $\mathcal{F}(\boldsymbol{W})$ is convex for the real and imaginary variables.\label{lemma1}\end{lemma}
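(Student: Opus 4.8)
The plan is to reduce the complex-variable objective to an ordinary real-variable function of the pair $(\bar{\boldsymbol{W}},\ddot{\boldsymbol{W}})$ and then recognize it as a composition that is manifestly convex. First I would decompose every matrix into its real and imaginary parts, writing $\boldsymbol{H}=\bar{\boldsymbol{H}}+\ddot{\boldsymbol{H}}i$, $\boldsymbol{W}=\bar{\boldsymbol{W}}+\ddot{\boldsymbol{W}}i$, and $\boldsymbol{A}=\bar{\boldsymbol{A}}+\ddot{\boldsymbol{A}}i$, and expand the product $\boldsymbol{H}\boldsymbol{W}$. Collecting real and imaginary parts gives
\begin{equation}
\boldsymbol{H}\boldsymbol{W}-\boldsymbol{A}=\left(\bar{\boldsymbol{H}}\bar{\boldsymbol{W}}-\ddot{\boldsymbol{H}}\ddot{\boldsymbol{W}}-\bar{\boldsymbol{A}}\right)+\left(\bar{\boldsymbol{H}}\ddot{\boldsymbol{W}}+\ddot{\boldsymbol{H}}\bar{\boldsymbol{W}}-\ddot{\boldsymbol{A}}\right)i.
\end{equation}

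Next I would use the elementary identity $\norm{\boldsymbol{M}}_F^2=\norm{\bar{\boldsymbol{M}}}_F^2+\norm{\ddot{\boldsymbol{M}}}_F^2$, valid for any complex matrix $\boldsymbol{M}$ since $|m_{i,j}|^2=\bar{m}_{i,j}^2+\ddot{m}_{i,j}^2$, to split the objective into two real squared norms,
\begin{equation}
\mathcal{F}(\boldsymbol{W})=\frac{1}{2}\norm{\bar{\boldsymbol{H}}\bar{\boldsymbol{W}}-\ddot{\boldsymbol{H}}\ddot{\boldsymbol{W}}-\bar{\boldsymbol{A}}}_F^2+\frac{1}{2}\norm{\bar{\boldsymbol{H}}\ddot{\boldsymbol{W}}+\ddot{\boldsymbol{H}}\bar{\boldsymbol{W}}-\ddot{\boldsymbol{A}}}_F^2.
\end{equation}
Now $\mathcal{F}$ depends only on the real variables $\bar{\boldsymbol{W}}$ and $\ddot{\boldsymbol{W}}$, and the key observation is that each residual inside the norms is an affine function of the pair $(\bar{\boldsymbol{W}},\ddot{\boldsymbol{W}})$: the coefficient matrices $\bar{\boldsymbol{H}},\ddot{\boldsymbol{H}}$ are constant and the variables enter linearly. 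Since $\boldsymbol{X}\mapsto\norm{\boldsymbol{X}}_F^2$ is convex, precomposition with an affine map preserves convexity, and a nonnegative sum of convex functions is convex; hence $\mathcal{F}$ is convex in the real and imaginary variables. To make this fully rigorous I would vectorize, stacking $\boldsymbol{x}=[\,\mbox{vec}(\bar{\boldsymbol{W}})^T,\ \mbox{vec}(\ddot{\boldsymbol{W}})^T\,]^T$ so that $\mathcal{F}(\boldsymbol{W})=\tfrac{1}{2}\norm{\boldsymbol{G}\boldsymbol{x}-\boldsymbol{b}}_2^2$ for a suitable real matrix $\boldsymbol{G}$ and vector $\boldsymbol{b}$ assembled from the blocks above; the Hessian with respect to $\boldsymbol{x}$ is then $\boldsymbol{G}^T\boldsymbol{G}\succeq 0$, which settles convexity.

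The main obstacle is purely bookkeeping rather than conceptual: one must expand $\boldsymbol{H}\boldsymbol{W}$ correctly and track the cross terms, because the coupling of $\bar{\boldsymbol{W}}$ and $\ddot{\boldsymbol{W}}$ through $\ddot{\boldsymbol{H}}$ is exactly what might superficially seem to threaten convexity. Once the objective is recognized as the squared norm of an affine map of the stacked real variable, convexity follows immediately from the positive semidefiniteness $\boldsymbol{G}^T\boldsymbol{G}\succeq 0$, so no eigenvalue or higher-order computation is required beyond assembling $\boldsymbol{G}$.
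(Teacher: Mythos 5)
Your proof is correct and complete: the decomposition of $\boldsymbol{H}\boldsymbol{W}-\boldsymbol{A}$ into real and imaginary residuals is right, and viewing $\mathcal{F}$ as the squared norm of an affine map of the stacked real variables (with Hessian $\boldsymbol{G}^T\boldsymbol{G}\succeq 0$) settles convexity. The paper omits the proof of this lemma entirely ("easy by the definition of convexity"), so your argument supplies the missing details, and it is consistent with the paper's own expansion in Appendix~A, where the quantities $U_{m,k}$ and $V_{m,k}$ are exactly the entries of your two affine residual blocks.
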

\begin{proof}
It is easy to prove by the definition of convexity so the proof is omitted.
\end{proof}

\begin{theorem}
For an unconstrained CMOP, if $\boldsymbol{H}^H\boldsymbol{H}$ is invertible, there exists a unique optimal solution $\boldsymbol{W}^{opt}=(\boldsymbol{H}^H\boldsymbol{H})^{-1}\boldsymbol{H}^H\boldsymbol{A}$.\label{the1}
\end{theorem}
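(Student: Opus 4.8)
The plan is to combine the convexity established in Lemma~\ref{lemma1} with a first-order stationarity condition obtained through Wirtinger calculus, so that a single stationary point is automatically the unique global minimizer. Because $\mathcal{F}$ maps into $\mathbb{R}$ it is nonanalytic in $\boldsymbol{W}$, so the natural route is to treat $\boldsymbol{W}$ and its conjugate $\boldsymbol{W}^*$ as formally independent variables and to locate the minimizer by setting the conjugate (Wirtinger) gradient to zero.

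First I would expand the objective using $\norm{\boldsymbol{X}}_F^2=\mbox{tr}(\boldsymbol{X}^H\boldsymbol{X})$, giving
\begin{equation}
\mathcal{F}(\boldsymbol{W})=\tfrac{1}{2}\mbox{tr}\!\left(\boldsymbol{W}^H\boldsymbol{H}^H\boldsymbol{H}\boldsymbol{W}-\boldsymbol{W}^H\boldsymbol{H}^H\boldsymbol{A}-\boldsymbol{A}^H\boldsymbol{H}\boldsymbol{W}+\boldsymbol{A}^H\boldsymbol{A}\right).
\end{equation}
Next I would differentiate with respect to $\boldsymbol{W}^*$, using the Wirtinger rule that terms depending only on $\boldsymbol{W}$ contribute nothing, which yields $\partial\mathcal{F}/\partial\boldsymbol{W}^*=\tfrac12(\boldsymbol{H}^H\boldsymbol{H}\boldsymbol{W}-\boldsymbol{H}^H\boldsymbol{A})$. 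Setting this gradient to zero produces the complex normal equation $\boldsymbol{H}^H\boldsymbol{H}\boldsymbol{W}=\boldsymbol{H}^H\boldsymbol{A}$, and invoking the invertibility hypothesis on $\boldsymbol{H}^H\boldsymbol{H}$ then delivers the candidate $\boldsymbol{W}^{opt}=(\boldsymbol{H}^H\boldsymbol{H})^{-1}\boldsymbol{H}^H\boldsymbol{A}$.

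Finally, to upgrade this stationary point to the unique global optimum, I would argue convexity. Since $\boldsymbol{H}^H\boldsymbol{H}$ is invertible (equivalently, $\boldsymbol{H}$ has full column rank, consistent with the stated $N\le M$), the quadratic form $\mbox{tr}(\boldsymbol{W}^H\boldsymbol{H}^H\boldsymbol{H}\boldsymbol{W})$ is positive definite, so $\mathcal{F}$ is strictly convex in the real and imaginary parts; combined with Lemma~\ref{lemma1} this makes the stationary point the unique minimizer. A self-contained alternative that avoids differentiation is the completing-the-square identity: writing an arbitrary point as $\boldsymbol{W}^{opt}+\boldsymbol{\Delta}$ and expanding, the cross terms cancel exactly because $\boldsymbol{H}^H(\boldsymbol{H}\boldsymbol{W}^{opt}-\boldsymbol{A})=\boldsymbol{0}$, leaving $\mathcal{F}(\boldsymbol{W}^{opt}+\boldsymbol{\Delta})=\mathcal{F}(\boldsymbol{W}^{opt})+\tfrac12\norm{\boldsymbol{H}\boldsymbol{\Delta}}_F^2\ge\mathcal{F}(\boldsymbol{W}^{opt})$, with equality only when $\boldsymbol{H}\boldsymbol{\Delta}=\boldsymbol{0}$, i.e. $\boldsymbol{\Delta}=\boldsymbol{0}$ by full column rank.

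The main obstacle I anticipate is conceptual rather than computational: justifying that the correct stationarity condition for a real-valued, nonanalytic objective is the vanishing of the conjugate Wirtinger gradient, and correctly tracking which terms survive differentiation with respect to $\boldsymbol{W}^*$. Once that convention is pinned down the algebra is routine, and the full-column-rank consequence of the invertibility hypothesis is exactly what secures uniqueness.
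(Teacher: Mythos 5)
Your proposal is correct, and it reaches the normal equation $\boldsymbol{H}^H\boldsymbol{H}\boldsymbol{W}=\boldsymbol{H}^H\boldsymbol{A}$ by a genuinely different mechanism than the paper. The paper's Appendix A avoids the Wirtinger formalism entirely: it expands $\mathcal{F}$ entrywise as $\tfrac12\sum_{m,k}U_{m,k}^2+V_{m,k}^2$ with $U,V$ the real and imaginary parts of the residual, sets $\partial\mathcal{F}/\partial\bar{w}_{n,k}=\partial\mathcal{F}/\partial\ddot{w}_{n,k}=0$, and then verifies by direct computation that these two real partials are exactly $\mbox{Re}$ and $\mbox{Im}$ of $\{\boldsymbol{H}^H(\boldsymbol{H}\boldsymbol{W}-\boldsymbol{A})\}_{n,k}$, so that real-domain stationarity is equivalent to $\nabla\mathcal{F}(\boldsymbol{W})=0$. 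Your trace-plus-conjugate-Wirtinger-gradient derivation compresses that bookkeeping into one matrix identity, at the cost of having to justify the convention that stationarity of a real nonanalytic objective is the vanishing of $\partial\mathcal{F}/\partial\boldsymbol{W}^*$ --- which is precisely the fact the paper's elementwise computation establishes from scratch. Where your write-up genuinely improves on the paper is uniqueness: the paper stops at "the optimal condition is $\nabla\mathcal{F}=0$" and never argues why the stationary point is the \emph{unique} minimizer, whereas your completing-the-square identity $\mathcal{F}(\boldsymbol{W}^{opt}+\boldsymbol{\Delta})=\mathcal{F}(\boldsymbol{W}^{opt})+\tfrac12\norm{\boldsymbol{H}\boldsymbol{\Delta}}_F^2$, together with the full-column-rank consequence of invertibility of $\boldsymbol{H}^H\boldsymbol{H}$, gives a self-contained proof of both optimality and uniqueness without any differentiation at all. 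One minor bookkeeping note: your conjugate gradient $\tfrac12(\boldsymbol{H}^H\boldsymbol{H}\boldsymbol{W}-\boldsymbol{H}^H\boldsymbol{A})$ differs from the paper's $\nabla\mathcal{F}(\boldsymbol{W})=\boldsymbol{H}^H\boldsymbol{H}\boldsymbol{W}-\boldsymbol{H}^H\boldsymbol{A}$ by the usual factor of two between Wirtinger conventions; this does not affect the zero set or the conclusion.
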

\begin{proof}
Please refer to Appendix A.
\end{proof}
Theorem \ref{the1} shows that the optimal condition in the real domain could be directly applied to the unconstrained CMOP.

\subsection{GD Method and Convergence Analysis}
In this part, we investigate the iterative method to solve $\mathcal{P}1$ in view of avoiding matrix inversion. Most importantly, for linear systems, the iterative method has lower complexity than the closed-form solution \cite{sda}. To achieve this goal, we derive the first-order condition of convexity. Based on the Lipschitz condition, we analyze the convergence of the GD method in the complex domain. For the $t$-th iteration, the iterative formula of the GD method satisfies:
\begin{equation}
\boldsymbol{W}^{t+1}=\boldsymbol{W}^{t}-\alpha\nabla\mathcal{F}(\boldsymbol{W}^{t}), \label{3.2}
\end{equation}
where $\alpha$ is the step size and $\nabla\mathcal{F}(\boldsymbol{W})=\boldsymbol{H}^H\boldsymbol{H}\boldsymbol{W}-\boldsymbol{H}^H\boldsymbol{A}$.
\begin{lemma}\label{lemma2}
For an unconstrained CMOP $\mathcal{F}(\boldsymbol{W})$, the first-order condition of convexity satisfies
\begin{equation}
\begin{aligned}
\mathcal{F}(\boldsymbol{W}^{})\ge \mathcal{F}(\boldsymbol{W}^{t})+\mbox{Re}[\boldsymbol{W}^{}-\boldsymbol{W}^{t},\nabla\mathcal{F}(\boldsymbol{W}^{t})]_F.
\end{aligned}\label{us3}
\end{equation}
\end{lemma}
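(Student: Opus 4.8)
The plan is to prove \eqref{us3} by a direct expansion of the Frobenius norm rather than by invoking a general convexity theorem, since $\mathcal{F}$ is a concrete quadratic and its gradient $\nabla\mathcal{F}(\boldsymbol{W})=\boldsymbol{H}^H\boldsymbol{H}\boldsymbol{W}-\boldsymbol{H}^H\boldsymbol{A}$ is already at hand. First I would introduce the increment $\boldsymbol{D}=\boldsymbol{W}-\boldsymbol{W}^{t}$ and the residual $\boldsymbol{R}^{t}=\boldsymbol{H}\boldsymbol{W}^{t}-\boldsymbol{A}$, so that $\boldsymbol{H}\boldsymbol{W}-\boldsymbol{A}=\boldsymbol{R}^{t}+\boldsymbol{H}\boldsymbol{D}$ and, crucially, $\nabla\mathcal{F}(\boldsymbol{W}^{t})=\boldsymbol{H}^H\boldsymbol{R}^{t}$.

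Next I would expand the squared norm. Writing $\mathcal{F}(\boldsymbol{W})=\tfrac12\norm{\boldsymbol{R}^{t}+\boldsymbol{H}\boldsymbol{D}}_F^2$ and using the polarization identity $\norm{\boldsymbol{X}+\boldsymbol{Y}}_F^2=\norm{\boldsymbol{X}}_F^2+2\,\mbox{Re}[\boldsymbol{X},\boldsymbol{Y}]_F+\norm{\boldsymbol{Y}}_F^2$ gives
\[
\mathcal{F}(\boldsymbol{W})=\mathcal{F}(\boldsymbol{W}^{t})+\mbox{Re}[\boldsymbol{R}^{t},\boldsymbol{H}\boldsymbol{D}]_F+\tfrac12\norm{\boldsymbol{H}\boldsymbol{D}}_F^2,
\]
where $\mathcal{F}(\boldsymbol{W}^{t})=\tfrac12\norm{\boldsymbol{R}^{t}}_F^2$. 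The three terms are, respectively, the value at $\boldsymbol{W}^{t}$, a linear cross term, and a nonnegative quadratic remainder.

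The crux is to identify the cross term with the claimed gradient term $\mbox{Re}[\boldsymbol{W}-\boldsymbol{W}^{t},\nabla\mathcal{F}(\boldsymbol{W}^{t})]_F$. Using the adjoint identity $[\boldsymbol{R}^{t},\boldsymbol{H}\boldsymbol{D}]_F=[\boldsymbol{H}^H\boldsymbol{R}^{t},\boldsymbol{D}]_F$ together with the conjugate symmetry $[\boldsymbol{X},\boldsymbol{Y}]_F=\overline{[\boldsymbol{Y},\boldsymbol{X}]_F}$ (which leaves the real part invariant), I obtain $\mbox{Re}[\boldsymbol{R}^{t},\boldsymbol{H}\boldsymbol{D}]_F=\mbox{Re}[\boldsymbol{D},\boldsymbol{H}^H\boldsymbol{R}^{t}]_F=\mbox{Re}[\boldsymbol{W}-\boldsymbol{W}^{t},\nabla\mathcal{F}(\boldsymbol{W}^{t})]_F$. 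Substituting this and discarding the nonnegative remainder $\tfrac12\norm{\boldsymbol{H}\boldsymbol{D}}_F^2\ge 0$ yields \eqref{us3}, with equality exactly when $\boldsymbol{H}\boldsymbol{D}=\boldsymbol{0}$.

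I expect the only delicate point to be the bookkeeping of the complex inner product in the third paragraph: one must be careful that it is the real part, and not the inner product itself, that is symmetric, and that the Hermitian transpose $\boldsymbol{H}^H$ (rather than the plain transpose) is the correct adjoint of $\boldsymbol{H}$ under $[\cdot,\cdot]_F$. Everything else is a routine quadratic expansion. As a consistency check, the same identity follows from Lemma \ref{lemma1}: applying the real-domain first-order convexity condition to $\mathcal{F}$ in the variables $(\bar{\boldsymbol{W}},\ddot{\boldsymbol{W}})$ and recombining the real and imaginary gradient contributions reproduces precisely $\mbox{Re}[\boldsymbol{W}-\boldsymbol{W}^{t},\nabla\mathcal{F}(\boldsymbol{W}^{t})]_F$.
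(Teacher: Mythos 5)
Your proof is correct, but it takes a genuinely different route from the paper. The paper derives \eqref{us3} as a consequence of Lemma \ref{lemma1}: it writes the convexity inequality $\lambda\mathcal{F}(\boldsymbol{W})+(1-\lambda)\mathcal{F}(\boldsymbol{W}^t)\ge\mathcal{F}(\boldsymbol{W}^t+\lambda\Delta\boldsymbol{W}^t)$, divides by $\lambda$, expands the right-hand side to first order in the real variables $\bar{w}_{n,k},\ddot{w}_{n,k}$, recombines the two real gradients into $\mbox{Re}[\boldsymbol{W}-\boldsymbol{W}^t,\nabla\mathcal{F}(\boldsymbol{W}^t)]_F$, and lets $\lambda\to 0$ — essentially the sketch you mention only as a consistency check at the end. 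You instead exploit the quadratic structure directly: the exact identity
\begin{equation*}
\mathcal{F}(\boldsymbol{W})=\mathcal{F}(\boldsymbol{W}^{t})+\mbox{Re}[\boldsymbol{W}-\boldsymbol{W}^{t},\nabla\mathcal{F}(\boldsymbol{W}^{t})]_F+\tfrac12\norm{\boldsymbol{H}(\boldsymbol{W}-\boldsymbol{W}^{t})}_F^2
\end{equation*}
followed by discarding the nonnegative remainder. Your bookkeeping of the adjoint ($\boldsymbol{H}^H$ under the Frobenius pairing) and of the symmetry of the real part is exactly right, and your argument is self-contained: it does not need Lemma \ref{lemma1}, avoids the limit and the little-$o$ term that the paper handles somewhat informally, and as a bonus identifies the exact remainder $\tfrac12\norm{\boldsymbol{H}(\boldsymbol{W}-\boldsymbol{W}^t)}_F^2$ — the same quantity the paper later needs in the proof of Theorem \ref{the3} when it rewrites $\mbox{Re}[\boldsymbol{W}^{t}-\boldsymbol{W}^{t+1},\nabla\mathcal{F}(\boldsymbol{W}^{t})-\nabla\mathcal{F}(\boldsymbol{W}^{t+1})]_F$ as $\norm{\boldsymbol{H}(\boldsymbol{W}^{t}-\boldsymbol{W}^{t+1})}_F^2$, and the equality case $\boldsymbol{H}(\boldsymbol{W}-\boldsymbol{W}^t)=\boldsymbol{0}$. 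The trade-off is generality: the paper's limit argument would survive replacing $\mathcal{F}$ by any convex differentiable function, whereas yours is tied to the specific least-squares objective of $\mathcal{P}1$ — which is all the lemma claims, so this is not a defect here.
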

\begin{proof}
Please refer to Appendix B.
\end{proof}

\begin{lemma}\label{lemma3}
The unconstrained CMOP satisfies a Lipschitz condition if a constant $L>0$ exists with
\begin{equation}
\begin{aligned}
\mathcal{F}(\boldsymbol{W}^{t+1})\leq \mathcal{F}(\boldsymbol{W}^{t})+\frac{L}{2}\norm{\boldsymbol{W}^{t+1}-\boldsymbol{W}^{t}}_F^2\\
+\mbox{Re}[\nabla\mathcal{F}(\boldsymbol{W}^{t}),\boldsymbol{W}^{t+1}-\boldsymbol{W}^{t}]_F.\label{2.15}
\end{aligned}
\end{equation}
\end{lemma}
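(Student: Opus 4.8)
The plan is to exploit the fact that $\mathcal{F}$ is exactly quadratic in $\boldsymbol{W}$, so the claimed inequality follows from an exact expansion together with a single operator-norm bound, with no mean-value or integral argument needed. Writing $\boldsymbol{\Delta}=\boldsymbol{W}^{t+1}-\boldsymbol{W}^{t}$ and the residual $\boldsymbol{R}=\boldsymbol{H}\boldsymbol{W}^{t}-\boldsymbol{A}$, I would substitute $\boldsymbol{W}^{t+1}=\boldsymbol{W}^{t}+\boldsymbol{\Delta}$ into $\mathcal{F}(\boldsymbol{W}^{t+1})=\frac{1}{2}\norm{\boldsymbol{R}+\boldsymbol{H}\boldsymbol{\Delta}}_F^2$ and expand the squared Frobenius norm via $\norm{\boldsymbol{R}+\boldsymbol{H}\boldsymbol{\Delta}}_F^2=\norm{\boldsymbol{R}}_F^2+2\,\mbox{Re}[\boldsymbol{R},\boldsymbol{H}\boldsymbol{\Delta}]_F+\norm{\boldsymbol{H}\boldsymbol{\Delta}}_F^2$, where the cross term collapses to twice a real part because $\mbox{tr}(\boldsymbol{R}^H\boldsymbol{H}\boldsymbol{\Delta})$ and its conjugate add. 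This isolates three pieces: $\frac{1}{2}\norm{\boldsymbol{R}}_F^2$, $\mbox{Re}[\boldsymbol{R},\boldsymbol{H}\boldsymbol{\Delta}]_F$, and $\frac{1}{2}\norm{\boldsymbol{H}\boldsymbol{\Delta}}_F^2$.

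The first piece is exactly $\mathcal{F}(\boldsymbol{W}^{t})$. For the second, I would move $\boldsymbol{H}$ across the inner product using the adjoint identity $[\boldsymbol{R},\boldsymbol{H}\boldsymbol{\Delta}]_F=[\boldsymbol{H}^H\boldsymbol{R},\boldsymbol{\Delta}]_F$, which follows from $\mbox{tr}(\boldsymbol{R}^H\boldsymbol{H}\boldsymbol{\Delta})=\mbox{tr}((\boldsymbol{H}^H\boldsymbol{R})^H\boldsymbol{\Delta})$, and then recognize $\boldsymbol{H}^H\boldsymbol{R}=\boldsymbol{H}^H\boldsymbol{H}\boldsymbol{W}^{t}-\boldsymbol{H}^H\boldsymbol{A}=\nabla\mathcal{F}(\boldsymbol{W}^{t})$. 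This reproduces exactly the gradient term $\mbox{Re}[\nabla\mathcal{F}(\boldsymbol{W}^{t}),\boldsymbol{W}^{t+1}-\boldsymbol{W}^{t}]_F$ appearing in the statement.

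This leaves only the quadratic piece, for which I would apply the bound $\norm{\boldsymbol{H}\boldsymbol{\Delta}}_F^2=\mbox{tr}(\boldsymbol{\Delta}^H\boldsymbol{H}^H\boldsymbol{H}\boldsymbol{\Delta})\le\lambda_{\max}(\boldsymbol{H}^H\boldsymbol{H})\,\norm{\boldsymbol{\Delta}}_F^2$, obtained by applying the Rayleigh-quotient bound for the positive semidefinite matrix $\boldsymbol{H}^H\boldsymbol{H}$ column by column. Setting the Lipschitz constant $L=\lambda_{\max}(\boldsymbol{H}^H\boldsymbol{H})$ then gives $\frac{1}{2}\norm{\boldsymbol{H}\boldsymbol{\Delta}}_F^2\le\frac{L}{2}\norm{\boldsymbol{\Delta}}_F^2$, and reassembling the three pieces yields precisely \eqref{2.15}. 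The same $L$ is in fact the Lipschitz constant of the gradient, since $\nabla\mathcal{F}(\boldsymbol{W}_1)-\nabla\mathcal{F}(\boldsymbol{W}_2)=\boldsymbol{H}^H\boldsymbol{H}(\boldsymbol{W}_1-\boldsymbol{W}_2)$ immediately implies $\norm{\nabla\mathcal{F}(\boldsymbol{W}_1)-\nabla\mathcal{F}(\boldsymbol{W}_2)}_F\le L\,\norm{\boldsymbol{W}_1-\boldsymbol{W}_2}_F$, so the descent inequality and gradient Lipschitz continuity are two faces of the same computation.

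Since $\mathcal{F}$ is quadratic, there is no genuinely hard analytic step: the expansion is an exact identity and only one inequality is invoked. The single point requiring care is the complex-domain bookkeeping of the real-part Frobenius inner product, namely confirming that the cross term is $2\,\mbox{Re}[\boldsymbol{R},\boldsymbol{H}\boldsymbol{\Delta}]_F$ rather than a complex-valued quantity, and that the adjoint identity passes correctly through the $\mbox{Re}[\cdot,\cdot]_F$ operator. This is exactly where a naive transcription of the real-variable descent lemma could go astray, so it is worth verifying explicitly.
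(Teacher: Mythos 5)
Your proposal is correct and rests on the same computation as the paper's Appendix~C: the exact quadratic expansion of $\mathcal{F}$ reduces the claimed inequality to $\norm{\boldsymbol{H}\boldsymbol{\Delta}}_F^2\le L\norm{\boldsymbol{\Delta}}_F^2$, which is precisely the condition the paper arrives at. The only difference is one of direction and completeness --- the paper starts from the desired inequality and works backwards to conclude that ``$L$ depends on $\boldsymbol{H}$'' without exhibiting it, whereas you run the argument forward and identify the admissible constant explicitly as $L=\lambda_{\max}(\boldsymbol{H}^H\boldsymbol{H})$, which is the logically cleaner presentation.
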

\begin{proof}
Please refer to Appendix C.
\end{proof}
\begin{theorem}
If the step size satisfies $\alpha\in (0,{2}/{L})$, the GD method converges to the optimal solution of the problem $\mathcal{P}1$ in the complex domain. And it satisfies 
\begin{equation}
    \mathcal{F}(\boldsymbol{W}^{t})-\mathcal{F}(\boldsymbol{W}^{t+1})\ge \alpha(1-\frac{\alpha L}{2})\norm{\nabla\mathcal{F}(\boldsymbol{W}^{t})}_F^2.
\end{equation}
\end{theorem}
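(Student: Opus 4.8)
The plan is to obtain the stated one-step decrease directly from the Lipschitz descent lemma (Lemma~\ref{lemma3}) and then to bootstrap it into convergence of the iterates. First I would substitute the GD update $\boldsymbol{W}^{t+1}-\boldsymbol{W}^{t}=-\alpha\nabla\mathcal{F}(\boldsymbol{W}^{t})$ into the right-hand side of the inequality in Lemma~\ref{lemma3}. The quadratic term becomes $\tfrac{L}{2}\alpha^{2}\norm{\nabla\mathcal{F}(\boldsymbol{W}^{t})}_F^2$, while the inner-product term becomes $\mbox{Re}[\nabla\mathcal{F}(\boldsymbol{W}^{t}),-\alpha\nabla\mathcal{F}(\boldsymbol{W}^{t})]_F=-\alpha\norm{\nabla\mathcal{F}(\boldsymbol{W}^{t})}_F^2$, using the identity $\mbox{Re}[\boldsymbol{X},\boldsymbol{X}]_F=\norm{\boldsymbol{X}}_F^2$. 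Rearranging the resulting terms yields exactly
\[
\mathcal{F}(\boldsymbol{W}^{t})-\mathcal{F}(\boldsymbol{W}^{t+1})\ge \alpha\Bigl(1-\tfrac{\alpha L}{2}\Bigr)\norm{\nabla\mathcal{F}(\boldsymbol{W}^{t})}_F^2,
\]
which is the claimed inequality. Notably, this step is pure algebra once Lemma~\ref{lemma3} is in hand.

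Next I would establish monotone decrease. For $\alpha\in(0,2/L)$ we have $\alpha>0$ and $1-\alpha L/2>0$, so the coefficient $\alpha(1-\alpha L/2)$ is strictly positive; hence $\mathcal{F}(\boldsymbol{W}^{t+1})\le\mathcal{F}(\boldsymbol{W}^{t})$, with strict decrease unless $\nabla\mathcal{F}(\boldsymbol{W}^{t})=\boldsymbol{0}$. Since $\mathcal{F}$ is a squared Frobenius norm it is bounded below, and by Theorem~\ref{the1} it attains its minimum, so the monotone bounded sequence $\{\mathcal{F}(\boldsymbol{W}^{t})\}$ converges. To extract information on the gradient I would telescope the descent inequality over $t=0,\dots,T-1$: the left side sums to $\mathcal{F}(\boldsymbol{W}^{0})-\mathcal{F}(\boldsymbol{W}^{T})$, which is bounded above by $\mathcal{F}(\boldsymbol{W}^{0})-\mathcal{F}(\boldsymbol{W}^{opt})$. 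This forces $\sum_{t}\norm{\nabla\mathcal{F}(\boldsymbol{W}^{t})}_F^2<\infty$, so $\norm{\nabla\mathcal{F}(\boldsymbol{W}^{t})}_F\to 0$.

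The main obstacle is the final step: passing from a vanishing gradient norm to convergence of the iterates themselves (not merely of the function values) to the unique optimizer $\boldsymbol{W}^{opt}$. I would resolve this by exploiting the explicit linear structure $\nabla\mathcal{F}(\boldsymbol{W})=\boldsymbol{H}^H\boldsymbol{H}\boldsymbol{W}-\boldsymbol{H}^H\boldsymbol{A}$. Since Theorem~\ref{the1} gives $\boldsymbol{H}^H\boldsymbol{H}\boldsymbol{W}^{opt}=\boldsymbol{H}^H\boldsymbol{A}$, subtracting yields $\nabla\mathcal{F}(\boldsymbol{W}^{t})=\boldsymbol{H}^H\boldsymbol{H}(\boldsymbol{W}^{t}-\boldsymbol{W}^{opt})$, and invertibility of $\boldsymbol{H}^H\boldsymbol{H}$ gives $\boldsymbol{W}^{t}-\boldsymbol{W}^{opt}=(\boldsymbol{H}^H\boldsymbol{H})^{-1}\nabla\mathcal{F}(\boldsymbol{W}^{t})$. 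Therefore $\norm{\nabla\mathcal{F}(\boldsymbol{W}^{t})}_F\to 0$ directly implies $\boldsymbol{W}^{t}\to\boldsymbol{W}^{opt}$, completing the convergence claim. The convexity of Lemmas~\ref{lemma1} and~\ref{lemma2} can alternatively be invoked to argue that a stationary point is globally optimal, but the linear-structure argument is the cleanest route and reuses only facts already established in the excerpt.
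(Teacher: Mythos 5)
Your derivation of the displayed inequality is exactly the paper's: substitute the GD update $\boldsymbol{W}^{t+1}-\boldsymbol{W}^{t}=-\alpha\nabla\mathcal{F}(\boldsymbol{W}^{t})$ into Lemma~\ref{lemma3} and collect terms. Where you diverge is in how the claim that the iterates converge to the optimizer is handled. The paper stops at the descent inequality and, in the surrounding text, defers sequence convergence to Theorem~\ref{the3} on the grounds that PGD degenerates to GD on the full space; you instead give a self-contained argument, telescoping the descent inequality against the lower bound $\mathcal{F}\ge 0$ to force $\norm{\nabla\mathcal{F}(\boldsymbol{W}^{t})}_F\to 0$, and then converting this into $\boldsymbol{W}^{t}\to\boldsymbol{W}^{opt}$ via the identity $\boldsymbol{W}^{t}-\boldsymbol{W}^{opt}=(\boldsymbol{H}^H\boldsymbol{H})^{-1}\nabla\mathcal{F}(\boldsymbol{W}^{t})$. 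Your route buys two things the paper's deferral does not: it is complete within the unconstrained setting, and it covers the entire stated step-size range $\alpha\in(0,2/L)$, whereas Theorem~\ref{the3} is only proved for $\alpha\in(0,1/L)$, so the paper's cross-reference does not actually justify convergence for $\alpha\in[1/L,2/L)$. The only caveat is that your last step uses the invertibility of $\boldsymbol{H}^H\boldsymbol{H}$, which is a hypothesis of Theorem~\ref{the1} rather than of this theorem; since the statement speaks of \emph{the} optimal solution, that assumption is implicitly in force anyway, but it would be worth stating explicitly.
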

\begin{proof}
By lemma \ref{lemma3}, we have.
\begin{equation}
\begin{aligned}
\mathcal{F}(\boldsymbol{W}^{t+1})&\leq \mathcal{F}(\boldsymbol{W}^{t})+\mbox{Re}[\nabla\mathcal{F}(\boldsymbol{W}^{t}),\boldsymbol{W}^{t+1}-\boldsymbol{W}^{t}]_F\\
&\qquad \qquad \qquad \qquad   +\frac{L}{2}\norm{\boldsymbol{W}^{t+1}-\boldsymbol{W}^{t}}_F^2\\
&\leq \mathcal{F}(\boldsymbol{W}^{t})+\mbox{Re}[\nabla\mathcal{F}(\boldsymbol{W}^{t}),-\alpha\nabla\mathcal{F}(\boldsymbol{W}^{t})]_F\\
&\qquad \qquad \qquad \qquad  +\alpha^2\frac{L}{2}\norm{\nabla\mathcal{F}(\boldsymbol{W}^{t})}_F^2\\
&\leq \mathcal{F}(\boldsymbol{W}^{t})-\alpha(1-\frac{\alpha L}{2})\norm{\nabla\mathcal{F}(\boldsymbol{W}^{t})}_F^2.
\end{aligned}
\end{equation}
Thus this theorem holds.
\end{proof}
Since the projection gradient descent (PGD) degenerates into GD when the domain is full space, the sequence convergence of GD is similar to that of PGD. For detials, please refer to Theorem 3.

\section{Constrained CMOPs in Frobenius Norm}\label{section3}
In this section, a common nonlinear constrained CMOP in Frobenius norm is investigated. To solve this issue, the PGD method is adopted. Then we provide the convergence analysis of PGD.
\begin{equation}
\begin{aligned}
\mathcal{P}2 \quad\min_{\boldsymbol{W}}  \mathcal{F}(\boldsymbol{W})=&\quad \frac{1}{2}\norm{\boldsymbol{H}\boldsymbol{W}-\boldsymbol{A}}_{F}^2\\
\quad s.t. & \quad\mbox{diag}(\boldsymbol{W}\boldsymbol{W}^{H})\leq \boldsymbol{1}\eta,
\end{aligned}
\end{equation}
where $\mbox{diag}(\cdot)$ is a matrix-to-vector operator that denotes the vector formed by the diagonal elements of matrices and $\boldsymbol{1}=[1,\cdots,1]^{T}$ stands for  the all-one vector. 
$\mathcal{P}2$ is a quadratically constrained quadratic programming. The Lagrangian multiplier is
\begin{equation}
\begin{aligned}
\mathcal{L}( \boldsymbol{W}, \boldsymbol{\lambda})=&\norm{\boldsymbol{H}\boldsymbol{W}-\boldsymbol{A}}_{F}^2+\boldsymbol{\lambda}^T (\mbox{diag}(\boldsymbol{W}\boldsymbol{W}^H)-\boldsymbol{1}\eta).\\
\end{aligned}
\end{equation}
By the Karush-Kuhn-Tucker (KKT) condition, we have 
\begin{equation*}
\begin{aligned}
\nabla\mathcal{L}(\boldsymbol{W}^{*},\boldsymbol{\lambda}^{*})=
[\boldsymbol{H}^H\boldsymbol{H}+\mbox{Diag}(\boldsymbol{\lambda}^*)]\boldsymbol{W}^{*}-\boldsymbol{H}^H\boldsymbol{A}=0\\
(\boldsymbol{\lambda}^*)^T (\mbox{diag}(\boldsymbol{W}^*(\boldsymbol{W}^*)^H)-\boldsymbol{1}\eta)=0,\\
(\mbox{diag}(\boldsymbol{W}^*(\boldsymbol{W}^*)^H)- \boldsymbol{1}\eta)\leq 0,\\
\boldsymbol{\lambda}^*\ge 0,
\end{aligned}
\end{equation*}
where $\mbox{Diag}(\cdot)$ is a vector-to-matrix operator. The optimal solution is equal to $\boldsymbol{W}^{*}=[\boldsymbol{H}^H\boldsymbol{H}+\mbox{Diag}(\boldsymbol{\lambda}^*)]^{-1}\boldsymbol{H}^H\boldsymbol{A}.$
The worst-case complexity to obtain the closed-form solution is $\mathcal{O}(2^N(N^2M+NMK+N^3))$. It is unacceptable for large dimensions of $\boldsymbol{W}$. To reduce the complexity, the PGD method \cite{BingshengHe2002} is adopted with $\mathcal{O}(N^2M+NMK+tN^2K)$.

\subsection{PGD Method and Convergence Analysis}
In this part, the PGD method is implemented to solve the constrained CMOP in Frobenius norm. Then we analyze the convergence of the PGD method in the complex domain. 
 The iterative formula of the PGD method is written as
\begin{equation}
\boldsymbol{W}^{t+1}=P_{\Omega}(\boldsymbol{W}^{t}-\alpha\nabla\mathcal{F}(\boldsymbol{W}^{t})), \label{3.21}
\end{equation}
where $\Omega=\{\boldsymbol{W}|\mbox{diag}(\boldsymbol{W}\boldsymbol{W}^{H})\leq \boldsymbol{1}\eta\}$ is the domain of $\mathcal{P}2$ and the projection matrix satisfies $\boldsymbol{W}^{t+1}\in \Omega$. Complete pseudo-code is given in Algorithm \ref{algorithm1}.
\begin{lemma}
For any $\boldsymbol{W}^t$, let $\boldsymbol{W}^{t+1}$ be generated by (\ref{3.21}). For any complex matrix $\boldsymbol{W}\in \Omega$, we have 
\begin{equation}
   \mbox{Re}[\boldsymbol{W}^{t+1}-\boldsymbol{W},(\boldsymbol{W}^{t}-\alpha\nabla\mathcal{F}(\boldsymbol{W}^{t}))-\boldsymbol{W}^{t+1}]_F\ge0.\label{3.3}
\end{equation}
\end{lemma}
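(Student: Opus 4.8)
The plan is to recognize (\ref{3.3}) as the variational (``obtuse angle'') characterization of the Euclidean projection onto a closed convex set, transcribed to the space of complex matrices equipped with the real Frobenius inner product $\mbox{Re}[\cdot,\cdot]_F$. Write $\tilde{\boldsymbol{W}}=\boldsymbol{W}^{t}-\alpha\nabla\mathcal{F}(\boldsymbol{W}^{t})$, so that by (\ref{3.21}) the iterate $\boldsymbol{W}^{t+1}=P_{\Omega}(\tilde{\boldsymbol{W}})$ is by definition the unique minimizer of $g(\boldsymbol{X})=\tfrac12\norm{\boldsymbol{X}-\tilde{\boldsymbol{W}}}_F^2$ over $\boldsymbol{X}\in\Omega$. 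As a preliminary I would first verify that $\Omega$ is convex: the $n$-th entry of $\mbox{diag}(\boldsymbol{W}\boldsymbol{W}^{H})$ equals the squared Euclidean norm of the $n$-th row of $\boldsymbol{W}$, which is a convex function of $\boldsymbol{W}$; hence each constraint $\le\eta$ defines a convex sublevel set, and $\Omega$ is their (convex, closed, nonempty) intersection, which guarantees that the projection exists and is unique.

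The core step is a first-order segment argument exploiting this convexity. Given $\boldsymbol{W}\in\Omega$ and $\theta\in(0,1]$, convexity gives $\boldsymbol{X}_{\theta}:=\boldsymbol{W}^{t+1}+\theta(\boldsymbol{W}-\boldsymbol{W}^{t+1})\in\Omega$, so optimality of $\boldsymbol{W}^{t+1}$ yields $\norm{\boldsymbol{X}_{\theta}-\tilde{\boldsymbol{W}}}_F^2\ge\norm{\boldsymbol{W}^{t+1}-\tilde{\boldsymbol{W}}}_F^2$. Writing $\boldsymbol{X}_{\theta}-\tilde{\boldsymbol{W}}=(\boldsymbol{W}^{t+1}-\tilde{\boldsymbol{W}})+\theta(\boldsymbol{W}-\boldsymbol{W}^{t+1})$ and expanding the squared Frobenius norm via the identity $\norm{\boldsymbol{U}+\boldsymbol{V}}_F^2=\norm{\boldsymbol{U}}_F^2+2\mbox{Re}[\boldsymbol{U},\boldsymbol{V}]_F+\norm{\boldsymbol{V}}_F^2$, the leading terms cancel and I obtain
\begin{equation}
2\theta\,\mbox{Re}[\boldsymbol{W}^{t+1}-\tilde{\boldsymbol{W}},\boldsymbol{W}-\boldsymbol{W}^{t+1}]_F+\theta^2\norm{\boldsymbol{W}-\boldsymbol{W}^{t+1}}_F^2\ge 0.
\end{equation}

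Dividing by $\theta>0$ and letting $\theta\to 0^{+}$ kills the quadratic remainder and leaves $\mbox{Re}[\boldsymbol{W}^{t+1}-\tilde{\boldsymbol{W}},\boldsymbol{W}-\boldsymbol{W}^{t+1}]_F\ge 0$. Finally I would convert this to the stated form: since $\mbox{Re}[\cdot,\cdot]_F$ is symmetric in its real part and $\mathbb{R}$-bilinear, negating both arguments and then swapping them gives $\mbox{Re}[\boldsymbol{W}^{t+1}-\tilde{\boldsymbol{W}},\boldsymbol{W}-\boldsymbol{W}^{t+1}]_F=\mbox{Re}[\boldsymbol{W}^{t+1}-\boldsymbol{W},\tilde{\boldsymbol{W}}-\boldsymbol{W}^{t+1}]_F$, and substituting back $\tilde{\boldsymbol{W}}=\boldsymbol{W}^{t}-\alpha\nabla\mathcal{F}(\boldsymbol{W}^{t})$ reproduces (\ref{3.3}). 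The computation itself is routine; the only points demanding care are the convexity of $\Omega$ (so that $\boldsymbol{X}_\theta$ is admissible) and keeping the sign bookkeeping of the complex inner product's real part consistent, since unlike the real case the ordering of the two arguments affects the imaginary part even though $\mbox{Re}[\cdot,\cdot]_F$ is symmetric. The main obstacle, such as it is, lies in establishing convexity of $\Omega$ cleanly, but that follows immediately from the row-norm interpretation above.
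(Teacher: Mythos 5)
Your proof is correct, and it takes a somewhat different route from the paper's. The paper proves this lemma by vectorizing: it stacks the real and imaginary parts of $\boldsymbol{W}$ into a real vector $\boldsymbol{u}=[vec(\mbox{Re}(\boldsymbol{W}))\ vec(\mbox{Im}(\boldsymbol{W}))]$, notes that the constraint set is convex in these real variables, invokes the standard real-domain projection inequality $(\boldsymbol{u}-\boldsymbol{u}^{t+1})^T(\boldsymbol{u}^t-\alpha\boldsymbol{v}^t-\boldsymbol{u}^{t+1})\leq 0$ as a known fact, and then observes that this real inner product coincides with $\mbox{Re}[\cdot,\cdot]_F$ to translate back to the complex matrix form. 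You instead work intrinsically in $\mathbb{C}^{N\times K}$ equipped with $\mbox{Re}[\cdot,\cdot]_F$ and re-derive the obtuse-angle characterization from scratch via the segment argument ($\boldsymbol{X}_\theta\in\Omega$ by convexity, expand the squared norm, divide by $\theta$, let $\theta\to 0^+$), followed by careful sign bookkeeping using the symmetry of the real part of the Frobenius inner product. Your version is more self-contained (it does not lean on the cited real-domain theorem and avoids the vectorization bookkeeping) and makes explicit two points the paper glosses over: that $\Omega$ is closed, convex, and nonempty so the projection is well defined and unique, and that the identification of $\mbox{Re}[\cdot,\cdot]_F$ with a genuine real inner product is what makes the classical argument go through. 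The paper's version is shorter precisely because it outsources the variational inequality to the real case; the underlying mathematical content is the same.
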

\begin{proof}
Please refer to Appendix D.
\end{proof}
\begin{algorithm}[t]\label{algorithm1}
\caption{PGD method in the complex domain}
\LinesNumbered
\KwIn{ Step size $\alpha$; tolerant error $\tau$.}
\KwOut{The optimal solution: $\boldsymbol{W}^{*}$.}
Initialize $\boldsymbol{W}^0$\;
\While{$error\ge\tau$}{
$\hat{\boldsymbol{W}^t}=\boldsymbol{W}^t-\alpha\nabla\mathcal{F}(\boldsymbol{W}^t)$\; 
$\boldsymbol{W}^{t+1}=$ \textbf{\texttt{Projection}}$(\hat{\boldsymbol{W}^t},\eta)$ \;
$error=\mathcal{F}(\boldsymbol{W}^t)-\mathcal{F}(\boldsymbol{W}^{t+1})$\;
$\boldsymbol{W}^t=\boldsymbol{W}^{t+1}$\;
} 
 \SetKwFunction{FMain}{\textbf{Projection}}
\SetKwProg{Fn}{function}{:}{}
 \Fn{\FMain{$\boldsymbol{W},\eta$}}{
$\boldsymbol{\ell}=\sqrt{\mbox{diag}(\boldsymbol{W}\boldsymbol{W}^H)}$, 
$\boldsymbol{\iota}=\mbox{find}(\boldsymbol{\ell}>\eta)$\;
$\boldsymbol{W}(\boldsymbol{\iota},:)=\eta\cdot\boldsymbol{W}(\boldsymbol{\iota},:)\oslash[\boldsymbol{\ell}(\boldsymbol{\iota})\boldsymbol{1}^{T}(\boldsymbol{\iota})].$\;}
\textbf{end}\\
\Return { $\boldsymbol{W}^{*}=\boldsymbol{W}^{t}$}\;
 \end{algorithm}

\begin{theorem}
For any $\boldsymbol{W}^t$, let $\boldsymbol{W}^{t+1}$ be generated by the PGD method. If the step size satisfies $\alpha\in(0,1/L)$, the PGD method converges and the following inequalities hold.
\begin{equation}
\begin{aligned}
\mathcal{F}(\boldsymbol{W}^{t})-\mathcal{F}(\boldsymbol{W}^{t+1})\ge (\frac{1}{\alpha}-L)\norm{\boldsymbol{W}^t-\boldsymbol{W}^{t+1}}_F^2,
\end{aligned}\label{yu11}
\end{equation}
\begin{equation}
\begin{aligned}
\norm{\boldsymbol{W}^{t}-\boldsymbol{W}^{opt}}_F^2\ge&\norm{\boldsymbol{W}^{t+1}-\boldsymbol{W}^{opt}}_F^2\\
&+(1-\alpha L)\norm{\boldsymbol{W}^{t}-\boldsymbol{W}^{t+1}}_F^2.
\end{aligned}\label{yu12}
\end{equation}

\label{the3}
\end{theorem}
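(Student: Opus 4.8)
The plan is to obtain both inequalities from the three facts already in hand: the variational characterization of the projection (\ref{3.3}), the descent-type Lipschitz bound (\ref{2.15}), and the first-order convexity inequality (\ref{us3}). Throughout I would use that the real Frobenius inner product is symmetric, $\mbox{Re}[\boldsymbol{X},\boldsymbol{Y}]_F=\mbox{Re}[\boldsymbol{Y},\boldsymbol{X}]_F$, and that $\mbox{Re}[\boldsymbol{X},\boldsymbol{X}]_F=\norm{\boldsymbol{X}}_F^2$. Since each iterate is an output of $P_{\Omega}$, both $\boldsymbol{W}^t$ and the optimizer $\boldsymbol{W}^{opt}$ lie in $\Omega$, so (\ref{3.3}) may be invoked with $\boldsymbol{W}$ equal to either of them.

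First I would prove (\ref{yu11}). Applying (\ref{3.3}) with $\boldsymbol{W}=\boldsymbol{W}^t$ and writing $(\boldsymbol{W}^t-\alpha\nabla\mathcal{F}(\boldsymbol{W}^t))-\boldsymbol{W}^{t+1}=-(\boldsymbol{W}^{t+1}-\boldsymbol{W}^t)-\alpha\nabla\mathcal{F}(\boldsymbol{W}^t)$, expansion together with $\mbox{Re}[\boldsymbol{X},\boldsymbol{X}]_F=\norm{\boldsymbol{X}}_F^2$ gives $\mbox{Re}[\nabla\mathcal{F}(\boldsymbol{W}^t),\boldsymbol{W}^{t+1}-\boldsymbol{W}^t]_F\le-\tfrac{1}{\alpha}\norm{\boldsymbol{W}^{t+1}-\boldsymbol{W}^t}_F^2$. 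Substituting this into the Lipschitz bound (\ref{2.15}) yields $\mathcal{F}(\boldsymbol{W}^t)-\mathcal{F}(\boldsymbol{W}^{t+1})\ge(\tfrac{1}{\alpha}-\tfrac{L}{2})\norm{\boldsymbol{W}^t-\boldsymbol{W}^{t+1}}_F^2$, which is at least $(\tfrac{1}{\alpha}-L)\norm{\boldsymbol{W}^t-\boldsymbol{W}^{t+1}}_F^2$ because $L>0$; the hypothesis $\alpha<1/L$ makes the coefficient positive, establishing monotone decrease of $\mathcal{F}$.

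For (\ref{yu12}) I would start from the identity $\norm{\boldsymbol{W}^t-\boldsymbol{W}^{opt}}_F^2-\norm{\boldsymbol{W}^{t+1}-\boldsymbol{W}^{opt}}_F^2=\norm{\boldsymbol{W}^{t+1}-\boldsymbol{W}^t}_F^2-2\mbox{Re}[\boldsymbol{W}^{t+1}-\boldsymbol{W}^{opt},\boldsymbol{W}^{t+1}-\boldsymbol{W}^t]_F$, obtained by expanding both squared norms around $\boldsymbol{W}^{t+1}$. Invoking (\ref{3.3}) with $\boldsymbol{W}=\boldsymbol{W}^{opt}$ bounds the cross term by $\mbox{Re}[\boldsymbol{W}^{t+1}-\boldsymbol{W}^{opt},\boldsymbol{W}^{t+1}-\boldsymbol{W}^t]_F\le-\alpha\,\mbox{Re}[\boldsymbol{W}^{t+1}-\boldsymbol{W}^{opt},\nabla\mathcal{F}(\boldsymbol{W}^t)]_F$, so everything reduces to a lower bound on $\mbox{Re}[\boldsymbol{W}^{t+1}-\boldsymbol{W}^{opt},\nabla\mathcal{F}(\boldsymbol{W}^t)]_F$.

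The hard part will be precisely this lower bound, since it is where all three lemmas must be combined consistently. I would split $\boldsymbol{W}^{t+1}-\boldsymbol{W}^{opt}=(\boldsymbol{W}^{t+1}-\boldsymbol{W}^t)+(\boldsymbol{W}^t-\boldsymbol{W}^{opt})$; convexity (\ref{us3}) at base point $\boldsymbol{W}^t$ with comparison point $\boldsymbol{W}^{opt}$ gives $\mbox{Re}[\boldsymbol{W}^t-\boldsymbol{W}^{opt},\nabla\mathcal{F}(\boldsymbol{W}^t)]_F\ge\mathcal{F}(\boldsymbol{W}^t)-\mathcal{F}(\boldsymbol{W}^{opt})$, while the Lipschitz bound (\ref{2.15}) gives $\mbox{Re}[\nabla\mathcal{F}(\boldsymbol{W}^t),\boldsymbol{W}^{t+1}-\boldsymbol{W}^t]_F\ge\mathcal{F}(\boldsymbol{W}^{t+1})-\mathcal{F}(\boldsymbol{W}^t)-\tfrac{L}{2}\norm{\boldsymbol{W}^{t+1}-\boldsymbol{W}^t}_F^2$. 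Adding these, the $\mathcal{F}(\boldsymbol{W}^t)$ terms cancel and optimality $\mathcal{F}(\boldsymbol{W}^{t+1})\ge\mathcal{F}(\boldsymbol{W}^{opt})$ leaves $\mbox{Re}[\boldsymbol{W}^{t+1}-\boldsymbol{W}^{opt},\nabla\mathcal{F}(\boldsymbol{W}^t)]_F\ge-\tfrac{L}{2}\norm{\boldsymbol{W}^{t+1}-\boldsymbol{W}^t}_F^2$. Feeding this back through the cross-term bound and the identity produces $\norm{\boldsymbol{W}^t-\boldsymbol{W}^{opt}}_F^2-\norm{\boldsymbol{W}^{t+1}-\boldsymbol{W}^{opt}}_F^2\ge(1-\alpha L)\norm{\boldsymbol{W}^t-\boldsymbol{W}^{t+1}}_F^2$, i.e. (\ref{yu12}); with $\alpha<1/L$ the right-hand side is nonnegative, so $\{\norm{\boldsymbol{W}^t-\boldsymbol{W}^{opt}}_F\}$ is nonincreasing and, together with (\ref{yu11}), $\norm{\boldsymbol{W}^{t+1}-\boldsymbol{W}^t}_F\to0$, giving convergence. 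The delicate point is to apply convexity at the optimum and the Lipschitz inequality at the iterate so that the $\mathcal{F}(\boldsymbol{W}^t)$ contributions cancel exactly, leaving only the nonnegative optimality gap.
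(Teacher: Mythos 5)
Your proposal is correct: it proves both inequalities from exactly the three ingredients the paper uses, namely the projection inequality (\ref{3.3}), the descent bound (\ref{2.15}), and the first-order convexity condition (\ref{us3}), and for (\ref{yu12}) your argument is the paper's argument with the steps reordered (the paper first merges (\ref{us3}) and (\ref{2.15}) into a bound on $\mathcal{F}(\boldsymbol{W})-\mathcal{F}(\boldsymbol{W}^{t+1})$, then applies the projection property and sets $\boldsymbol{W}=\boldsymbol{W}^{opt}$ before expanding the squared norm, whereas you start from the norm identity and bound the cross term; both discard the same nonnegative gap $\mathcal{F}(\boldsymbol{W}^{t+1})-\mathcal{F}(\boldsymbol{W}^{opt})\ge 0$). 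The one place you genuinely diverge is (\ref{yu11}): the paper applies convexity at the base point $\boldsymbol{W}^{t+1}$ and uses $\nabla\mathcal{F}(\boldsymbol{W}^{t})-\nabla\mathcal{F}(\boldsymbol{W}^{t+1})=\boldsymbol{H}^H\boldsymbol{H}(\boldsymbol{W}^{t}-\boldsymbol{W}^{t+1})$ together with $\norm{\boldsymbol{H}(\boldsymbol{W}^{t}-\boldsymbol{W}^{t+1})}_F^2\le L\norm{\boldsymbol{W}^{t}-\boldsymbol{W}^{t+1}}_F^2$, which costs a full $L$; you instead feed the projection inequality straight into (\ref{2.15}) and obtain the sharper coefficient $\frac{1}{\alpha}-\frac{L}{2}$ before relaxing it to $\frac{1}{\alpha}-L$. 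Your route is shorter, avoids the gradient-difference manipulation entirely, and shows the stated step-size condition $\alpha<1/L$ is not tight (decrease already holds for $\alpha<2/L$); the paper's route makes the dependence on $\boldsymbol{H}$ explicit but gives the weaker constant. No gaps.
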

\begin{proof}
Using the first-order condition of convexity, we have \begin{equation}
\begin{aligned}
\mathcal{F}(\boldsymbol{W}^{t})\ge& \mathcal{F}(\boldsymbol{W}^{t+1})+\mbox{Re}[\boldsymbol{W}^{t}-\boldsymbol{W}^{t+1},\nabla\mathcal{F}(\boldsymbol{W}^{t+1})]_F\\
=& \mathcal{F}(\boldsymbol{W}^{t+1})+\mbox{Re}[\boldsymbol{W}^{t}-\boldsymbol{W}^{t+1},\nabla\mathcal{F}(\boldsymbol{W}^{t})]_F\\
&-\mbox{Re}[\boldsymbol{W}^{t}-\boldsymbol{W}^{t+1},\nabla\mathcal{F}(\boldsymbol{W}^{t})-\nabla\mathcal{F}(\boldsymbol{W}^{t+1})]_F\\
=& \mathcal{F}(\boldsymbol{W}^{t+1})+\mbox{Re}[\boldsymbol{W}^{t}-\boldsymbol{W}^{t+1},\nabla\mathcal{F}(\boldsymbol{W}^{t})]_F\\
&\qquad\qquad\qquad\qquad-\norm{\boldsymbol{H}(\boldsymbol{W}^{t}-\boldsymbol{W}^{t+1})}_F^2\\
\stackrel{(\ref{2.15})}{\ge}& \mathcal{F}(\boldsymbol{W}^{t+1})+\mbox{Re}[\boldsymbol{W}^{t}-\boldsymbol{W}^{t+1},\nabla\mathcal{F}(\boldsymbol{W}^{t})]_F\\
&\qquad\qquad\qquad\qquad-L\norm{\boldsymbol{W}^{t}-\boldsymbol{W}^{t+1}}_F^2\\
\stackrel{(\ref{3.3})}{\ge}& \mathcal{F}(\boldsymbol{W}^{t+1})+\frac{1}{\alpha}\norm{\boldsymbol{W}^{t}-\boldsymbol{W}^{t+1}}_F^2\\
&\qquad\qquad\qquad\qquad-L\norm{\boldsymbol{W}^{t}-\boldsymbol{W}^{t+1}}_F^2\\
=&\mathcal{F}(\boldsymbol{W}^{t+1})+(\frac{1}{\alpha}-L)\norm{\boldsymbol{W}^{t}-\boldsymbol{W}^{t+1}}_F^2.
\end{aligned}
\end{equation}
So the inequality (\ref{yu11}) holds.
From (\ref{us3}) and (\ref{2.15}), we get 
\begin{equation}
\begin{aligned}
\mathcal{F}(\boldsymbol{W})-\mathcal{F}(\boldsymbol{W}^{t+1})\ge
\mbox{Re}[\boldsymbol{W}-\boldsymbol{W}^{t+1},\nabla\mathcal{F}(\boldsymbol{W}^{t})]_F\\
-\frac{L}{2}\norm{\boldsymbol{W}^{t+1}-\boldsymbol{W}^{t}}_F^2.
\end{aligned}\label{us}
\end{equation}
On the other hand, with the projection property, we have 
\begin{equation}
   \mbox{Re}[\boldsymbol{W}-\boldsymbol{W}^{t+1},\nabla\mathcal{F}(\boldsymbol{W}^{t})]_{F}\ge\tfrac{1}{\alpha}\mbox{Re}[\boldsymbol{W}-\boldsymbol{W}^{t+1},\boldsymbol{W}^{t}-\boldsymbol{W}^{t+1}]_F.\label{us15}
\end{equation}
Substituting (\ref{us15}) in (\ref{us}), we obtain
\begin{equation}
\begin{aligned}
\mathcal{F}(\boldsymbol{W})-\mathcal{F}(\boldsymbol{W}^{t+1})\ge&
\frac{1}{\alpha}\mbox{Re}[\boldsymbol{W}-\boldsymbol{W}^{t+1},\boldsymbol{W}^{t}-\boldsymbol{W}^{t+1}]_F\\
&-\frac{L}{2}\norm{\boldsymbol{W}^{t+1}-\boldsymbol{W}^{t}}_F^2.    
\end{aligned}\label{us18}
\end{equation}
Setting $\boldsymbol{W}=\boldsymbol{W}^{opt}$ in (\ref{us18}), we have
\begin{equation}
\begin{aligned}
\mbox{Re}[\boldsymbol{W}^{t}-\boldsymbol{W}^{opt},\boldsymbol{W}^{t}&-\boldsymbol{W}^{t+1}]_F\ge\alpha [\mathcal{F}(\boldsymbol{W}^{t+1})-\mathcal{F}(\boldsymbol{W}^{opt})]\\
&+(1-\alpha L/2)\norm{\boldsymbol{W}^{t+1}-\boldsymbol{W}^{t}}_F^2.
\end{aligned}
\end{equation}
By the above inequality, we find 
\begin{equation}
\begin{aligned}
 &\norm{\boldsymbol{W}^{t+1}-\boldsymbol{W}^{opt}}_F^2=\norm{\boldsymbol{W}^{t}-\boldsymbol{W}^{opt}-(\boldsymbol{W}^{t}-\boldsymbol{W}^{t+1})}_F^2\\
=&\norm{\boldsymbol{W}^{t}-\boldsymbol{W}^{opt}}_F^2+\norm{\boldsymbol{W}^{t}-\boldsymbol{W}^{t+1}}_F^2\\
&\qquad-2\mbox{Re}[\boldsymbol{W}^{t}-\boldsymbol{W}^{opt},\boldsymbol{W}^{t}-\boldsymbol{W}^{t+1}]_{F}\\
\leq&\norm{\boldsymbol{W}^{t}-\boldsymbol{W}^{opt}}_F^2-2\alpha [\mathcal{F}(\boldsymbol{W}^{t+1})-\mathcal{F}(\boldsymbol{W}^{opt})]\\
&\qquad\qquad\qquad\qquad-(1-\alpha L)\norm{\boldsymbol{W}^{t}-\boldsymbol{W}^{t+1}}_F^2.\\
\leq&\norm{\boldsymbol{W}^{t}-\boldsymbol{W}^{opt}}_F^2
-(1-\alpha L)\norm{\boldsymbol{W}^{t}-\boldsymbol{W}^{t+1}}_F^2.\quad
\end{aligned}
\end{equation}
Thus the inequality (\ref{yu12}) holds.
\end{proof}

\section{Numerical experiments}\label{section4}
  In this letter, the dimensions of complex matrices are $M=10$, $N=5$, and $K=8$, respectively. Matrix $\boldsymbol{H}$ and Matrix $\boldsymbol{A}$ are randomly generated in the complex space. The interval of the random number is set to [-10 10]. The value of $\eta$ is set to 2. Our main interest is to show the convergence of GD and PGD. We set different values of the step size to investigate the performance in terms of the convergence rate.

\begin{figure}[t]
    \centering
    \includegraphics[width=3.6in]{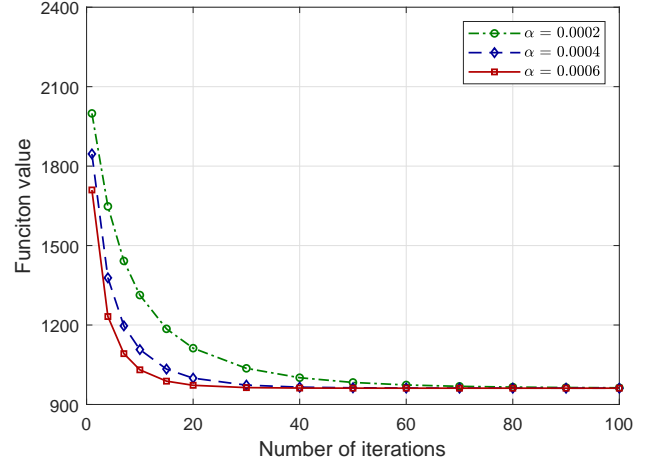}
    \caption{The convergence of the GD method for the unconstrained CMOP.}
    \label{fig:3}
\end{figure}
\begin{figure}[t]
    \centering
    \includegraphics[width=3.6in]{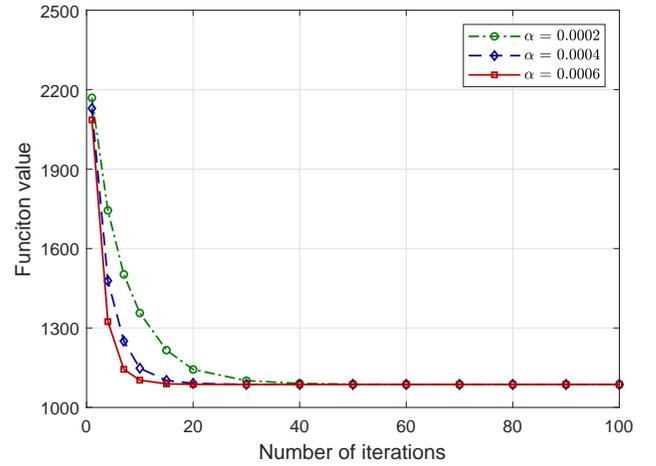}
    \caption{The convergence of the PGD method for the constrained CMOP.}
    \label{fig:5}
\end{figure}
In Fig.\ref{fig:3}, we show the convergence of the GD method for the unconstrained CMOP in Frobenius norm under different values of the step size. We observe that the GD method converges which means that the GD method is feasible for the unconstrained CMOP in Frobenius norm. When the step size is equal to 0.0006 it has a fast convergence rate. When the step size is equal to 0.0002 it has a slow convergence rate. This means that a large step size improves the convergence rate when the GD method converges. Moreover, we also find that the minimizer of the unconstrained CMOP in Frobenius norm is greater than zero.

In Fig. \ref{fig:5}, we demonstrate the convergence of the PGD method for the constrained CMOP in Frobenius norm under different values of the step size. First, the PGD method is feasible for the constrained CMOP in Frobenius norm. Secondly, the minimizer of this problem is also greater than zero. Similar to Fig. \ref{fig:3}, Properly increasing the step size improves the convergence rate. Note that the PGD method and GD method may not converge for given extra large step size. Moreover, we find that the PGD method seems to have a convergence rate similar to that of the GD method.

\section{Conclusion}\label{section5}
Matrix optimization problems in complex variables are frequently encountered in applied mathematics and engineering applications. To cope with them, in this letter, several important optimization theorems, i.e., the first-order condition of convexity, Lipschitz condition, and projection property are firstly extended to the complex domain.  In addition, we proved that iterative methods, i.e., GD and PGD can converge in the complex domain.

\appendices

\section{Proof of Theorem 1}
\begin{proof}
This problem is expanded as 
\begin{equation*}
\begin{aligned}
&\mathcal{F}(\boldsymbol{W})= \frac{1}{2}\norm{\boldsymbol{H}\boldsymbol{W}-\boldsymbol{A}}_{F}^2=\frac{1}{2}\sum_{m=1}^{M}\sum_{k=1}^{K}U_{m,k}^2+V_{m,k}^2,
\end{aligned}
\end{equation*}
where $U_{m,k}=\sum_{n=1}^{N}\bar{h}_{m,n}w_{n,k}^x-\ddot{h}_{m,n}\ddot{w}_{n,k}-\bar{a}_{m,k}$; $V_{m,k}=\sum_{n=1}^{N}\bar{h}_{m,n}\ddot{w}_{n,k}+\ddot{h}_{m,n}\bar{w}_{n,k}-\ddot{a}_{m,k}$. 
Since problem $\mathcal{P}1$ is convex, we optimize the two sets of variables $\{\bar{w}_{n,k},\ddot{w}_{n,k}\}$. In the real domain, if the complex partial derivative satisfies the following condition, we obtain the optimal solution.
\begin{equation}
\begin{aligned}
\frac{\partial\mathcal{F}(\boldsymbol{W})}{\partial \bar{w}_{n,k}}=&\sum_{m=1}^M U_{m,k}\bar{h}_{m,n}+V_{m,k}\ddot{h}_{m,n}=0,\\
\frac{\partial\mathcal{F}(\boldsymbol{W})}{\partial \ddot{w}_{n,k}}=&\sum_{m=1}^M -U_{m,k}\ddot{h}_{m,n}+V_{m,k}\bar{h}_{m,n}=0.\label{2.10}
\end{aligned}
\end{equation}
On the other hand, the complex derivatives of  $\mathcal{F}(\boldsymbol{W})$ is equal to
$\nabla\mathcal{F}(\boldsymbol{W})=\boldsymbol{H}^H(\boldsymbol{H}\boldsymbol{W}-\boldsymbol{A})$ and we have 
\begin{equation}
\begin{aligned}
\{\nabla\mathcal{F}(\boldsymbol{W})\}_{n,k}=&\begin{matrix}\sum_{m=1}^M U_{m,k}\bar{h}_{m,n}+V_{m,k}\ddot{h}_{m,n}\end{matrix}\\
&+(-U_{m,k}\ddot{h}_{m,n}+V_{m,k}\bar{h}_{m,n})i.\label{2.11}
\end{aligned}
\end{equation}
From the formulae (\ref{2.10}) and (\ref{2.11}), we find that the following holds.
\begin{equation}
\begin{aligned}
\frac{\partial\mathcal{F}(\boldsymbol{W})}{\partial \bar{w}_{n,k}}=\mbox{Re}(\{\nabla\mathcal{F}(\boldsymbol{W})\}_{n,k}),\\
\frac{\partial\mathcal{F}(\boldsymbol{W})}{\partial \ddot{w}_{n,k}}=\mbox{Im}(\{\nabla\mathcal{F}(\boldsymbol{W})\}_{n,k}).
\end{aligned}
\end{equation}
For any $n,k$, if formula (\ref{2.10}) holds, we get  $\{\nabla\mathcal{F}(\boldsymbol{W})\}_{n,k}=0$.
Thus the optimal condition is 
$\nabla\mathcal{F}(\boldsymbol{W})=\boldsymbol{H}^H(\boldsymbol{H}\boldsymbol{W}-\boldsymbol{A})=0$ and the optimal solution of the problem $\mathcal{P}1$ is $\boldsymbol{W}^{opt}=(\boldsymbol{H}^H\boldsymbol{H})^{-1}(\boldsymbol{H}^H\boldsymbol{A})$.
\end{proof}

\section{Proof of Lemma 2}
\begin{proof}
Let $\Delta\boldsymbol{W}^t =\boldsymbol{W}^{}-\boldsymbol{W}^t$. 
By Lemma \ref{lemma1}, we have 
\begin{equation*}
\begin{aligned}
    &\lambda\mathcal{F}(\boldsymbol{W}^{})+(1-\lambda)\mathcal{F}(\boldsymbol{W}^t)\ge \mathcal{F}(\lambda\boldsymbol{W}^{}+(1-\lambda)\boldsymbol{W}^t)\\
    &\lambda(\mathcal{F}(\boldsymbol{W}^{})-\mathcal{F}(\boldsymbol{W}^t))\ge \mathcal{F}(\boldsymbol{W}^{t}+\lambda\Delta\boldsymbol{W}^t)-\mathcal{F}(\boldsymbol{W}^t)\\
    &\mathcal{F}(\boldsymbol{W}^{})-\mathcal{F}(\boldsymbol{W}^t)\ge \frac{\mathcal{F}(\boldsymbol{W}^{t}+\lambda\Delta\boldsymbol{W}^t)-\mathcal{F}(\boldsymbol{W}^t)}{\lambda}.
    \end{aligned}
\end{equation*}
    \begin{equation*}
\begin{aligned}
    &\mathcal{F}(\boldsymbol{W}^{})\ge\mathcal{F}(\boldsymbol{W}^t)+\sum_{n=1}^N\sum_{k=1}^K\mathcal{F}_{\bar{w}_{n,k}}(\boldsymbol{W}^t)(\bar{w}_{n,k}^{}-\bar{w}_{n,k}^{t})\\
    &\qquad \qquad  \qquad +\sum_{n=1}^N\sum_{k=1}^K\mathcal{F}_{\ddot{w}_{n,k}}(\boldsymbol{W}^t)(\ddot{w}_{n,k}^{}-\ddot{w}_{n,k}^{t})+o(\lambda)\\
    &\qquad \qquad =\mathcal{F}(\boldsymbol{W}^t)+\mbox{Re}[\boldsymbol{W}^{}-\boldsymbol{W}^{t},\nabla\mathcal{F}(\boldsymbol{W}^{t})]_F+o(\lambda).
\end{aligned}
\end{equation*}
When we take the limit of $\lambda$ to 0, the first-order condition of convexity holds.
\end{proof}
\section{Proof of Lemma 3}
\begin{proof} From the definition of the Lipschitz condition, we have
\begin{equation}
\begin{aligned}
\mathcal{F}(\boldsymbol{W}^{t+1})&\leq \mathcal{F}(\boldsymbol{W}^{t})+\mbox{Re}[\nabla\mathcal{F}(\boldsymbol{W}^{t}),\boldsymbol{W}^{t+1}-\boldsymbol{W}^{t}]_F\\
&\qquad \qquad \qquad \qquad \qquad \quad +\frac{L}{2}\norm{\boldsymbol{W}^{t+1}-\boldsymbol{W}^{t}}_F^2\\
&\leq \mathcal{F}(\boldsymbol{W}^{t})+\mbox{Re}[\boldsymbol{H}\boldsymbol{W}^{t}-\boldsymbol{A},\boldsymbol{H}\boldsymbol{W}^{t+1}-\boldsymbol{H}\boldsymbol{W}^{t}]_F\\
&\qquad \qquad \qquad \qquad \qquad \quad +\frac{L}{2}\norm{\boldsymbol{W}^{t+1}-\boldsymbol{W}^{t}}_F^2\\
&\leq -\mathcal{F}(\boldsymbol{W}^{t})+\mbox{Re}[\boldsymbol{H}\boldsymbol{W}^{t}-\boldsymbol{A},\boldsymbol{H}\boldsymbol{W}^{t+1}-\boldsymbol{A}]_F\\
&\qquad \qquad \qquad \qquad \qquad \quad +\frac{L}{2}\norm{\boldsymbol{W}^{t+1}-\boldsymbol{W}^{t}}_F^2.
\end{aligned}
\end{equation}
From the above formula, we obtain
\begin{equation}
\begin{aligned}
\norm{\boldsymbol{H}(\boldsymbol{W}^{t+1}-\boldsymbol{W}^{t})}_F^2\leq L\norm{\boldsymbol{W}^{t+1}-\boldsymbol{W}^{t}}_F^2.
\end{aligned}
\end{equation}
Thus Lipschitz constant $L$ of the unconstrained CMOP depends on the complex matrix $\boldsymbol{H}$.
\end{proof}

\section{Proof of Lemma 4}
\begin{proof} In the real domain, the constraint is convex and it can be written as  
\begin{equation}
    \sum_{k=1}^K \bar{w}_{n,k}^2+\ddot{w}_{n,k}^2\leq \eta, \forall n,
\end{equation}
\begin{equation}
    \sum_{n=1}^N\sum_{k=1}^K \bar{w}_{n,k}^2+\ddot{w}_{n,k}^2\leq N\eta.
\end{equation}
 By the projection's property, we have 
\begin{equation}
    (\boldsymbol{u}-\boldsymbol{u}^{t+1})^T(\boldsymbol{u}^t-\alpha\boldsymbol{v}^t-\boldsymbol{u}^{t+1})\leq0, \forall \boldsymbol{u}\in \Omega.\label{3.5}
\end{equation}
where $\boldsymbol{v}=[vec(\mbox{Re}(\nabla\mathcal{F}(\boldsymbol{W})))\ vec(\mbox{Im}(\nabla\mathcal{F}(\boldsymbol{W})))]$, $\boldsymbol{u}=[vec(\mbox{Re}(\boldsymbol{W}))\ vec(\mbox{Im}(\boldsymbol{W}))]$ and $\boldsymbol{u}^{t+1}=P_{\Omega}(\boldsymbol{u}^t-\alpha\boldsymbol{v}^t)$. The inequality (\ref{3.5}) is equivalent to the following state.
For any matrix $ \boldsymbol{W} \in \Omega$, we have
\begin{equation}
\mbox{Re}[\boldsymbol{W}^{t+1}-\boldsymbol{W},(\boldsymbol{W}^{t}-\alpha\nabla\mathcal{F}(\boldsymbol{W}^{t}))-\boldsymbol{W}^{t+1}]_F\ge0.
\end{equation}
Thus this lemma holds.
\end{proof}


\bibliographystyle{ieeetr}
\bibliography{ref}

\end{document}